\long\def\@makecaption#1#2{%
  \vskip3pt
  \sbox\@tempboxa{\small#1. #2}%
    \ifdim \wd\@tempboxa >\hsize
    \small#1. #2\par
  \else
    \global \@minipagefalse
    \hb@xt@\hsize{\hfil\box\@tempboxa\hfil}%
  \fi
  \vskip0pt}
\newtheorem{proc}{Proposition}[section] 
\newtheorem{cor}{Corollary}[section]   
\newtheorem{lem}{Lemma}[section]
\newtheorem{defi}{Definition}[section]
\newtheorem{rmk}{Remark}[section]
\def\vk{\varkappa}
 \DeclareMathOperator{\id}{id} 
 \DeclareMathOperator{\Ker}{Ker} 
 \DeclareMathOperator{\im}{Im}
\DeclareMathOperator{\Hom}{Hom}  
 \DeclareMathOperator{\Sym}{Sym} 
\DeclareMathOperator{\End}{End}
\def\rw{\rightarrow}
\newcommand{\p}{\partial}
\def\cC{\mathcal C}
\def\cE{\mathcal E}
\newcommand{\cF}{\mathcal F}
\newcommand{\cD}{\mathcal D}
\newcommand{\cA}{\mathcal A}
\newcommand{\R}{\mathbb R}
\newcommand{\Z}{\mathbb Z}
\newcommand{\C}{\mathbb C}
\def\be{\begin{equation}}
\def\ee{\end{equation}}
\def\bea{\begin{eqnarray}}
\def\eea{\end{eqnarray}}
\DeclareFontFamily{OT1}{wncyi}{} \DeclareFontShape{OT1}{wncyi}{m}{it}{
   <5> <6> <7> <8> <9> gen * wncyi
   <10> <10.95> <12> <14.4> <17.28> <20.74> <24.88> wncyi10
  }{}
\DeclareSymbolFont{cyrletters}{OT1}{wncyi}{m}{it} 
\DeclareSymbolFontAlphabet{\cyrmath}{cyrletters} 
\DeclareMathSymbol{\rE}{\cyrmath}{cyrletters}{003} 
\DeclareMathSymbol{\rD}{\cyrmath}{cyrletters}{068} 
\DeclareMathSymbol{\rG}{\cyrmath}{cyrletters}{017} 
\DeclareMathSymbol{\rI}{\cyrmath}{cyrletters}{073} 
\DeclareMathSymbol{\rL}{\cyrmath}{cyrletters}{076} 
\DeclareMathSymbol{\rZ}{\cyrmath}{cyrletters}{090}
\newdimen\theight
\def \refright#1{%
             \vadjust{\setbox0=\hbox{\quad\vtop{\hsize5cm\bf\noindent #1}}%
             \theight=\ht0
             \advance\theight by \dp0    \advance\theight by \lineskip
             \kern -\theight \vbox to \theight{\rightline{\rlap{\box0}}%
             \vss}%
             }}%
\begin{document}

\begin{center}
{\LARGE \bf  Some remarks on contact manifolds, Monge-Amp\`ere equations and solution singularities} 
\end{center}
\vspace{.5cm}
\begin{center}

{\Large A.M.Vinogradov}
\end{center}

\bigskip

\begin{center}
$^{1}$
Levi-Civita Institute, 83050 Santo Stefano del Sole (AV), Italia.
\end{center}
\smallskip

\vspace{1.0cm}

\noindent {\bf Abstract.}
We describe some natural relations connecting contact geometry, classical Monge-Amp\`ere 
equations and theory of singularities of solutions to nonlinear PDEs. They reveal the
hidden meaning of Monge-Amp\`ere  equations and sheds new light on some aspects 
of contact geometry.          

\vspace{5.0cm}
\pagebreak

\tableofcontents

 \section{Introduction}
 In this note we describe some natural relations among subjects in the title. While
 contact geometry and Monge-Amp\`ere equations are classical themes, studied in numerous
 works, their natural ties with the theory of singularities of solutions to nonlinear PDEs, as far 
 as we know, were not yet explicitly established  and, a consequence, duly exploited.  
 
The fact that a (nonlinear) PDE is surrounded by a ``cloud" of  subsidiary equations, which describe  
the behavior of singularities of its generalized solutions is also not well-known. These equations were
introduced by the author in \cite{Vsing} and since that are waiting to be systematically studied. 
A detailed exposition of fundamentals of this theory will be published in \cite{Vin}. It is worth
noticing that some particular subsidiary equations are \emph{implicitly} well-known. For instance, 
such are equations describing wave front propagation in the theory of hyperbolic equations, 
or equations of geometrical optics, which, in fact, describe some kind singularities of solutions 
of Maxwell's equations. We used ``implicitly" to stress that these equations were not
identified as a part of solution singularity theory.

A fundamental problem in this theory is whether it is possible to reconstruct the original 
equation if the subsidiary equations are known. In other words, we are asking whether the laws
describing behavior of singularities of a physical field (continuum media, etc), predetermine
the equations describing this field, (media, etc). This problem will be called the \emph
{reconstruction problem}. In this note we show that, informally speaking, the classical 
Monge-Amp\`ere equations are characterized by the fact that the reconstruction problem 
for them is a tautology. The exact formulation of this assertion requires various facts from 
geometry of jet spaces, including contact geometry, and theory of generalized solutions of 
nonlinear PDEs, which are collected and discussed along the paper. When being put in this perspective they bring into the light some structures and their interrelations whose importance 
was not duly understood even in the case they are \emph{formally known}. For instance, the 
intrinsic definition of a contact structure (see below definition \ref{main}) belongs to this list.
These small things are, in fact, rather useful and enrich even such classical subject as contact
geometry. The fact that the intrinsic definition of contact structures immediately leads to
to the explicit description of contact vector fields (see subsection \ref{cf}) of this point.

For the composition of this note is see the ``contents". Everything (manifolds, vector fields, 
etc) in it is assumed to be smooth. The $C^{\infty}(M)$--module of $k$-th order differential forms
(resp., vector fields) on a manifold $M$ is denoted by $\Lambda^k(M)$ (resp., $D(M)$).

\section{Contact manifolds and generalized solutions}
Let $M$ be an $n$--dimensional manifold. Recall that any projective $C^{\infty}(M)$--module 
$P$ of finite type is canonically identified with the module $\Gamma(\pi)$ of sections of a vector
bundle $\pi$ (see \cite{N}). The fiber $\pi^{-1}(x), \,x\in M,$ of $\pi$ is the quotient module 
$P_x:=P/\mu_x\cdot P$,  $\mu_x=\{f\in C^{\infty}(M)\,\mid\,f(x)=0\},$ considered as an 
$\R$--vector space. The \emph{dimension} of the vector bundle $\pi$ is the \emph{rank} of $P$.

Accordingly, below we treat a regular distribution on a manifold $M$ as a projective submodule 
$\cD$ of the $C^{\infty}(M)$--module $D(M)$ of vector fields on $M$. Since $D(M)_x$ is
canonically identified with the tangent to $M$ at $x$ space $T_xM$, $\cD_x$ may be viewed
as a subspace of $T_xM$ and this way one recovers the standard definition of a distribution
as a family of vector spaces $x\mapsto \cD_x\subset T_xM$.

Put $\vk:=D(M)/\cD$. So, $\vk_x$ is identified with $T_xM/\cD_x$ and this is the reason
to call $\vk$ \emph{normal} to $\cD$ bundle. Obviously, 
$\mathrm{rank}\,\vk=n-\mathrm{rank}\,\cD$.
Recall also that the {\it curvature} of $\cD$ 
is the $C^{\infty}(M)$--bilinear form on $\cD$ with values in $\vk$ defined as
$$
R(X,Y):=[X,Y](\mod\cD), \,X,Y\in\cD.
$$
If the rank of $\vk$ is 1, i.e., the distribution $\cD$ is of codimension 1, then $\cD$ 
will be called \emph{nondegenerate}, if
$$
\cD\ni X\mapsto R(X,\cdot)\in\Hom_{C^{\infty}(M)}(\cD,\vk)
$$
is an isomorphism of $C^{\infty}(M)$--modules. Equivalently, a distribution $\cD$ is nondegenerate 
if its curvature is a nondenerate $\vk$--valued $C^{\infty}(M)$--bilinear form on it.

The following is an intrinsic definition of a contact manifold.
\begin{defi}\label{main}
A \emph{contact manifold} is a manifold $M$ supplied with a nondegenerate distribution 
$\cD$ of codimension 1. Such a distribution $\cD$ is called a \emph{contact} structure on $M$.
\end{defi}

\subsection{Comparison with the standard definition.} 
Recall that the fiber at $x\in M$ of the 
vector bundle associated with the $C^{\infty}(M)$--module $\Hom_{C^{\infty}(M)}(P,Q)$ 
with $P,Q$ being projective $C^{\infty}(M)$--modules of finite type is $\Hom_{\R}(P_x,Q_x)$
(see \cite{N}). 
 In particular, this fiber for $\Hom_{C^{\infty}(M)}(\cD,\vk$ is 
$\Hom_{\R}(\cD_x,\vk_x)$. But $\R$--vector space $\vk_x$ is 1-dimensional. 
So, it is identified with $\R$ by choosing a base vector in it. If $\cD$ is locally given by the Pfaff 
equation $\omega=0, \,\omega\in \Lambda^1(M)$,  $X\in D(M)$ is such that $\omega(X)=1$
and $\nu=X(\mathrm{mod}\,\cD)$, then $\nu_x\in\cD_x$ is a base vector, and 
$\Hom_{C^{\infty}(M)}(\cD,\vk)$ (resp., $\Hom_{\R}(\cD_x,\vk_x)$) is identified
with $\cD^*:=\Hom_{C^{\infty}(M)}(\cD,C^{\infty}(M))$ (resp., $\cD^*_x:=\Hom_{\R}(\cD_x,\R)$).

The following assertion is a direct consequence of the standard formula 
$$
d\omega(X,Y)=X(\omega(Y))-Y(\omega(X))-\omega([X,Y]), \;X,Y\in D(M).
$$
\begin{lem}
With the above identifications the curvature $R$ of $\cD$ is identified with $d\omega\mid_{\cD}$,
i.e., $R(X,Y)=-d\omega(X,Y), \;\forall X,Y\in\cD$.
\end{lem}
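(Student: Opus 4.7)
The plan is to reduce the lemma to a direct application of the Cartan formula for the exterior derivative of a $1$-form on a pair of vector fields, using that $\omega$ vanishes identically on $\cD$.

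First, I would fix the trivialization of $\vk$ described in the preamble: choose $Z\in D(M)$ with $\omega(Z)=1$ (locally), and let $\nu := Z(\mathrm{mod}\,\cD)$ be the corresponding base section of $\vk$. For $X,Y\in\cD$, decompose $[X,Y]=fZ+W$ with $W\in\cD$ and $f\in\ci{M}$. By the definition of $R$ this yields $R(X,Y)=f\nu$, and applying $\omega$ to the decomposition gives $f=\omega([X,Y])$.

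Next, I would invoke the formula $d\omega(X,Y)=X(\omega(Y))-Y(\omega(X))-\omega([X,Y])$ quoted just above the lemma statement. Since $X,Y\in\cD$ forces $\omega(X)=\omega(Y)=0$, the first two terms vanish, leaving $d\omega(X,Y)=-\omega([X,Y])=-f$. Combining with the previous step one obtains $R(X,Y)=-d\omega(X,Y)\,\nu$; under the identification $\nu\leftrightarrow 1$ that trivializes $\vk$ this is precisely the asserted equality $R(X,Y)=-d\omega(X,Y)$.

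There is no substantive obstacle, so the only point worth verifying is that the identification is intrinsic, i.e. independent of the choices of $\omega$ and $Z$. Under a rescaling $\omega\mapsto g\omega$ of the defining form (with $g$ nowhere zero), the trivializing field must be rescaled as $Z\mapsto g^{-1}Z$, so $\nu\mapsto g^{-1}\nu$; on the other hand $d(g\omega)|_{\cD}=g\,d\omega|_{\cD}$, because $dg\wedge\omega$ restricts to zero on $\cD$. The two rescalings cancel, confirming that the identification of $R$ with $-d\omega|_{\cD}$ is well defined and intrinsic.
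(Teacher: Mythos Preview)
Your proof is correct and follows exactly the approach indicated by the paper: the paper simply states that the lemma is a direct consequence of the Cartan formula $d\omega(X,Y)=X(\omega(Y))-Y(\omega(X))-\omega([X,Y])$, and you have spelled out precisely this computation, using $\omega(X)=\omega(Y)=0$ for $X,Y\in\cD$. Your additional verification that the identification is independent of the choices of $\omega$ and $Z$ is a useful supplement but goes beyond what the paper records.
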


This lemma shows that nondegeneracy of $R_x$ is equivalent to nondegeneracy of 
$(d\omega)_x, \,\forall x\in M$, i.e., that the bilinear form on the vector space $\cD_x$
$(d\omega)_x$ is symplectic . This nondegeneracy property may be
also expressed by saying that $d\omega$ is nondegenerate on $\cD$, i.e.,
that 
$$
\cD\ni X\quad\mapsto\quad d\omega(X,\cdot)\mid_{\cD}\in\cD^*
$$
is an isomorphism of $C^{\infty}(M)$--modules, or, equivalently, that $\omega\wedge d\omega^m$
is a local volume form on $M$ assuming that $n=2m+1$. These observations show that definition
\ref{main} is equivalent to the standard one.

Recall that $\omega\in\Lambda(M)$ is a \emph{contact form} on $M$ if $\cD:=\{\omega=0\}$ is
a contact structure on $M$. Contact forms defining the same distribution differ one from another 
by a nowhere vanishing factor $f\in C^{\infty}(M)$. Moreover, contact forms associated with a 
contact distribution are, generally, defined only locally. By these two reasons definition \ref{main} 
is more convenient that the standard one. An illustration of that is in the subsequent subsection.

\subsection{Contact transformatins and vector fields.}\label{cf}
A symmetry of a distribution $\cD$ on $M$ is a diffeomorphism $F\colon M\rw M$ that preserves
$\cD$, i.e., $F(X)\in\cD$ if $X\in\cD$ with $F(X):=(F^*)^{-1}\circ X\circ F^*$ being the image of $X$
via $F$. Traditionally symmetries of a contact structure/manifold are called \emph{contact transformations}.

An \emph{infinitesimal symmetry} of $\cD$ is a vector field $Z\in D(M)$ such that 
$[X,Z]\in\cD, \,\forall X\in\cD$. Alternatively, infinitesimal symmetries of $\cD$ are defined as vector
fields whose flows consist of (local) symmetries of $\cD$. Infinitesimal symmetries of $\cD$ form
a Lie subalgebra of $D(M)$, which we denote by $\Sym\cD$. If $\cD$ is contact, then infinitesimal symmetries of $\cD$ are called \emph{contact vector fields} and some authors use a slightly 
ambiguous $\mathrm{Cont}\,M$ for $\Sym\cD$ if $M$ is a contact manifold (see \cite{KLR}).

Now we shall show that definition \ref{main} directly leads to a natural description of contact
vector fields. To this end, we note that with a vector field $Z\in M$ a homomorphism  $\phi_Z\colon \cD\rw\vk$ of $C^{\infty}(M)$--modules is associated. Namely, if $X\in\cD$,
then $\phi_Z(X)=[X,Z] (\mathrm{mod}\,\cD)$. Since $[fX,Z]=f[X,Z]-Z(f)X$, one sees that $\phi_Z$ 
is a $C^{\infty}(M)$--homomorphism.
\begin{proc}\label{cont}
If $\cD$ is contact, then the $\R$--linear map
$$
\Sym\cD\ni Z \quad\mapsto \quad (Z\;\mathrm{mod}\,\cD)\in\vk
$$
is biunique.
\end{proc}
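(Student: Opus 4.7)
The plan is to split the assertion into injectivity and surjectivity of the $\R$-linear map $\sigma:\Sym\cD\to\vk$, $Z\mapsto Z\,(\mathrm{mod}\,\cD)$, and in each case reduce to the isomorphism
$$
\cD\ni W\;\mapsto\; R(W,\cdot)\in\Hom_{C^\infty(M)}(\cD,\vk)
$$
that constitutes the nondegeneracy of the contact distribution.

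For injectivity, I take $Z\in\Sym\cD$ with $\sigma(Z)=0$, i.e.\ $Z\in\cD$. Then for every $X\in\cD$ both arguments of the bracket lie in $\cD$, so $\phi_Z(X)=[X,Z]\,(\mathrm{mod}\,\cD)=R(X,Z)$. Since $Z$ is an infinitesimal symmetry, $[X,Z]\in\cD$, hence $R(X,Z)=0$ for all $X\in\cD$. By the skew-symmetry of $R$ and the nondegeneracy isomorphism above, this forces $Z=0$.

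For surjectivity, I pick an arbitrary lift $Z_0\in D(M)$ of $\nu\in\vk$ and try to correct it by an element $W\in\cD$ so that $Z:=Z_0+W$ becomes a contact vector field; note that such $Z$ still represents $\nu$. Since $\phi_{(\cdot)}$ is additive and $W\in\cD$, for any $X\in\cD$ one has
$$
\phi_Z(X)=\phi_{Z_0}(X)+[X,W]\,(\mathrm{mod}\,\cD)=\phi_{Z_0}(X)+R(X,W)=\phi_{Z_0}(X)-R(W,X).
$$
Thus $Z\in\Sym\cD$ if and only if $R(W,\cdot)=\phi_{Z_0}$ as elements of $\Hom_{C^\infty(M)}(\cD,\vk)$. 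Because the paper has already verified that $\phi_{Z_0}$ is genuinely a $C^\infty(M)$-homomorphism, nondegeneracy delivers a unique such $W\in\cD$, proving surjectivity (and, incidentally, giving a second route to injectivity via uniqueness of $W$).

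The only point that could be mistaken for an obstacle is making sure the equation $\phi_{Z_0}=R(W,\cdot)$ really lives in $\Hom_{C^\infty(M)}(\cD,\vk)$ rather than merely in the $\R$-linear world, since nondegeneracy is stated on the $C^\infty(M)$-module level; but this is exactly the observation preceding the proposition that $\phi_Z$ is $C^\infty(M)$-linear, combined with the $C^\infty(M)$-bilinearity of $R$. Given that, the whole argument is a two-line application of the defining isomorphism of a nondegenerate distribution, and as a bonus it produces an explicit formula $Z=Z_0-R^{-1}(\phi_{Z_0})$ for the contact vector field with prescribed transversal part $\nu$, which is the ``explicit description of contact vector fields'' promised in the introductory paragraph of subsection \ref{cf}.
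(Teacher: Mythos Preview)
Your proof is correct and follows essentially the same route as the paper's: both split into injectivity and surjectivity, kill a contact field in $\cD$ via nondegeneracy of $R$, and for surjectivity lift $\nu$ arbitrarily and correct by the unique $W\in\cD$ solving $R(W,\cdot)=\phi_{Z_0}$ (the paper's $Y$ and $Z'$ are your $W$ and $Z_0$). One small slip in your closing aside: since you derived $R(W,\cdot)=\phi_{Z_0}$, the explicit formula should read $Z=Z_0+R^{-1}(\phi_{Z_0})$, not with a minus sign.
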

\begin{proof}
\textbf{Injectivity}. If $Z\in \cD\cap(\Sym\cD)$, then 
$$
R(Z,X)=0, \,\forall X\in\cD \quad\Longleftrightarrow\quad R(Z,\cdot)=0.
$$ 
Hence, by nondegeneracy of $R$, $Z=0$.\\
\textbf{Surjectivity.}
Let $\vk\ni\nu=Z'(\mathrm{mod}\,\cD)$. Since the curvature form $R$ is nondegenerate, 
there is $Y\in\cD$ such that $\phi_{Z'}=R(Y,\cdot)$, i.e.,  
$[X,Z']=[Y, X]\,(\mathrm{mod}\,\cD), \,\forall X\in\cD$. So, $[X,Y+Z']\in\cD, \,\forall X\in\cD$ and, therefore, $Z=Y+Z'$ is contact and 
$Z(\mathrm{mod\,\cD})=Z'(\mathrm{mod}\,\cD)=\nu$.
\end{proof}
If the contact vector field $Z$ corresponds via proposition \ref{cont} to $\nu\in\vk$, then
 $\nu$ is called the \emph{generating function} of $Z$ and $Z$ will be denoted by $X_{\nu}$. 
In other words, if $Z\in D(M)$ is contact, then $Z=X_{\nu}$ with $\nu=Z(\mathrm{mod}\,\cD)$. 
Also, proposition \ref{cont} allows to transfer the Lie algebra structure in $\Sym\cD$ to $\vk$. 
Namely, the transfered  Lie bracket, denoted by $\{\cdot,\cdot\}$, is defined by the relation
$$
X_{\{\mu,\nu\}}=[X_{\mu},X_{\nu}], \quad \mu,\nu\in\vk
$$

The $\R$--linear map $\chi\colon\vk\rw D(M), \,\chi(\nu)=X_{\nu}$, is a 1-st order 
differential operator. To prove it we have to show that 
$$
[f,[g,\chi]](\nu)=0, \; \forall f,g\in C^{\infty}(M), \,\nu\in\vk
$$
(see \cite{N}). In view of proposition \ref{cont} it suffices to show that the vector field
$$
Z=[f,[g,\chi]](\nu)=X_{fg\nu}-fX_{g\nu}-gX_{f\nu}+fgX_{\nu}
$$
is contact and its generating function is trivial. But, obviously, 
$X_{h\nu}-hX_{\nu}\in\cD, \,\forall h\in  C^{\infty}(M)$. This proves that $Z\in\cD$. Next, 
if $X\in \cD$, then
\begin{equation*}
\begin{array}{l}
[X,Z]=[X,X_{fg\nu}]-f[X,X_{g\nu}]-g[X,X_{f\nu}]+\\
\qquad fg[X,X_{\nu}]-X(f)X_{g\nu}-X(g)X_{f\nu}+X(fg)X_{\nu}
\end{array}
\end{equation*}
Each of first 4 terms in this expression, obviously, belongs to $\cD$. The remaining 3 terms
of it may be rewritten in the form
$$
X(f)(gX_{\nu}-X_{g\nu})+X(g)(fX_{\nu}-X_{f\nu})),
$$ 
and, as we have seen before, each of 2 summands  in this expression also belongs to $\cD$. 


\subsection{Jets and generalized solutions of nonlinear PDEs}\label{jets}
Fix a manifold $E^{n+m}$ of dimension $n+m, \,m,n\in\ \Z_+$. The $k$--jet of an $n$--dimensional submanifold $N^n\subset E^{n+m}$ at a point $a\in N$ is the equivalence class of $n$--dimensional
submanifolds of $E$ passing through $a$, which are tangent to $N$ with order $k$. 
It will be denoted by $[N]_a^k$. The totality of such jets  forms a smooth manifold, denoted 
by $J^k(E,n)=\bigcup_{N\subset E, a\in N}[N]_a^k$. There is a natural map
\begin{eqnarray*}
j_k(N):N   {\longrightarrow} & J^k(E,n), \quad a  \longmapsto & [N]_a^k
\end{eqnarray*}
A function $f$ on $J^k(E,n)$ is defined to be \emph{smooth} if for any $n$--dimensional submanifold
$N\subset E$ the function $j_k(N)^*(f)$ is smooth. The so-defined smooth function algebra,
denoted by $\cF_k(E,n)$, supplies $J^k(E,n)$ with a smooth manifold structure. The minimal
distribution on $J^k(E,n)$ such that any submanifold of the form $N_{(k)}=\im j_k(N)\subset J^k(E,n)$ is integral for it is the \emph{$k$--th order contact structure}, or \emph{Cartan distribution} on
$J^k(E,n)$. Denote it by $\cC_k(E,n)$. If  $m=1$, then $\cC_1(E,n)$ is the (classiical) contact 
structure on $J^1(E,n)$. For $k\geq l$ the natural projection 
$$
J^k(E,n) \stackrel{\pi_{k,l}}{\longrightarrow} J^l(E,n),\quad 
[N]_a^k  \longmapsto  [N]_a^l
$$
sends $\cC_k(E,n)$ to $\cC_l(E,n)$.

An integral submanifold $L$ of a distribution $\cD$ is called  \emph{locally maximal} if it
is not contained even locally in an integral submanifold of greater dimension. Submanifolds 
$N_{(k)}$ are locally maximal integral submanifolds of $\cC_k(E,n)$ of dimension $n$. But 
except the case $k=m=1$ (contact geometry) there are locally maximal integral submanifolds 
of other types. More exactly, the \emph{type} of such a submanifold $U$ is the dimension of
$\pi_{k,k-1}(U)$, which may vary from $0$ to $n$. For instance, fibers of projection $\pi_{k,k-1}$ 
are of type $0$.  It should be stressed that the notion of type is \emph{intrinsic}, i.e., can be 
defined exclusively in terms of distribution $\cC_k(E,n)$. In this sense the contact geometry
on $J^1(E^{n+1},n)$ is the only exception to the general case.

Recall that a system of (nonlinear) PDEs of order $k$ is geometrically interpreted as a submanifold
$\cE$ of $J^k(E,n)$ for a suitable $E$. In this approach "usual" solutions of $\cE$ are interpreted 
as submanifolds $N\subset E$ such that $N_{(k)}\subset \cE$. Moreover, this interpretation allows 
one to define an analogue of the notion of generalized solutions in the theory of linear PDEs for nonlinear equations. This is achieved by enlarging the
class of submanifolds of the form $N_{(k)}$ to maximal integral submanifolds of type $n$, called
\emph{R-manifolds}. They play the role of Legendrian submanifolds in contact geometry.

 While an R-manifold  $U$ is, by definition, smooth, its singular point are defined to be singular
 points of the projection $\pi_{k,k-1}\mid_U$. Their totality, denoted by $U_{sing}$, is a union of
 submanifolds with singularities. If $\theta\in U_{sing}$, then the kernel of the differential
 $d_{\theta}(\pi_{k,k-1}\mid_U):T_{\theta}U\rw T_{\pi_{k,k-1}(\theta)}J^{k-1}(E,n)$ at $\theta\in U$
 is not trivial and is called the \emph{bend} of $U$ at $\theta$.  Denote it by 
 $\gimel_{\theta}=\gimel_{\theta,U}$.
We shall use the term $s$-\emph{bend} for a bend of dimension $s$. The notion of a bend is
key in the solution singularities theory (see subsection \ref{sng}).
\begin{defi}
A (generalized) $s$--bend solution of an equation $\cE\subset J^k(E,n)$ is an R-manifold
$U\subset \cE$ such that for any $\theta\in U_{sing}$ the dimension of $\gimel_{\theta,U}$
is $s$.  
\end{defi}  
  
 PDEs differ each other by types of generalized solutions they admit. Hence the problem of
 (local) classification of singularities of $R$--manifolds is a central one in geometrical theory 
of PDEs. In particular, as we shall see below, MA-equations are 
 distinguished by the structure of singularities of their generalized solutions.

\section{Monge-Amp\`ere equations}

\subsection{Classical Monge-Amp\`ere equations} Recall that \emph{classical} Monge-Amp\`ere equations (MAEs) are PDEs in two independent variables of the form
\begin{equation}\label{MA}
N(u_{xx}u_{yy}-u_{xy}^2)+Au_{xx}+Bu_{xy}+Cu_{yy}+D=0
\end{equation}
with $N,A,B,C,D$ being some functions of variables $x,y,u,u_{x},u_{y}$.
In the current literature the term ``Monge-Amp\`ere equation" may refer to one 
of various generalizations of classical MAEs.(see, for instance, \cite{KLR}). 

Definition (\ref{MA}) is descriptive and as such does not reveal the true nature of this class of 
equations. Our main goal here is to discover the hidden meaning behind analytical expression (\ref{MA}). First of all, it is important to stress that equations (\ref{MA}) are, in their turn, (locally) subdivided into 3 classes, namely, elliptic, parabolic, or hyperbolic ones, according to 
$AC-B^2-4ND<0, =0, or >0$, respectively. 

The first step toward revealing this meaning was 
due to S.~Lie, who observed that the class of elliptic (resp., parabolic, or hyperbolic) MAEs  
is invariant with respect to contact transformations. Much later some authors and, first of all, 
Lychagin and his collaborators, interpreted a MA-equation as the condition $\omega\,|_L=0$
 imposed on Legendrian submanifolds $L$'s of a 5--dimensional contact manifold $M$ for a 
 given  \emph{effective} 2-form $\omega$ (see  \cite{Ly}, \cite{Mori}, \cite{KLR}). The same
 condition imposed on Legendrian submanifolds of an arbitrary contact manifold is a natural
 generalization of classical MA-equations to higher dimensions.
Nevertheless, though being coordinate-free, this definition is still of a descriptive character.

\subsection{Jordan algebras of self adjoint operators} 
In this subsection $M$ stands for 
a contact manifold of dimension $2n+1$. The curvature form allows one to distinguish 
an important class of operators in 
$\End_{C^{\infty}(M)}\cD$. Namely, a $C^{\infty}(M)$--homomorphism $A\colon \cD\rw\cD$
is called \emph{self-adjoint} if 
$$
R(AX,Y)=R(X,AY), \,\forall X,Y\in\cD.
$$
\emph{Scalar}, i.e., multiplication by a function operators are, obviously, self-adjoint. 
Self-adjoint operators form a Jordan algebra, denoted by $\mathrm{Sad}(\vk)$, with 
respect to the Jordan product
$$
A*B\colon= \frac{1}{2}(AB+BA).
$$
 $\mathrm{Sad}(\vk)$ is, obviously, a $C^{\infty}(M)$--module, and in what follows  we shall
 concentrate on  \emph{JS-subalgebras} of $\mathrm{Sad}(\vk)$. These are submodules of  
 $\mathrm{Sad}(\vk)$ closed with respect to the Jordan product. Such a subalgebra is 
 \emph{unital} if it contains the identity endomorphism and hence the algebra $C^{\infty}(M)$
 interpreted as a subalgebra of $\End_{C^{\infty}(M)}\cD$. Two JS-algebras are \emph{isomorphic}
 if there is an isomorphism of supporting them $C^{\infty}(M)$--modules preserving the Jordan
 product.
 
 By literally repeating the above definitions in the situation when an $\R$--vector space $V$ 
 and a skew-symmetric bilinear form $\sigma=<\cdot,\cdot>$ on $V$ taking values in a 
 1-dimensional vector space $W$ substitute $M, R$ and $\vk$, respectively, one gets notions
 of ($\sigma$--) self-adjoint operators in $\End\,V$, JS-subalgebras in $\End\,V$, etc.
 
 If $\cA\subset \mathrm{Sad}(\vk)$ is a JS-subalgebra, then its fiber $\cA_x$ at $x\in M$
 inherits a structure of a JS-subalgebra in $\End_{\R}\cD_x$ with respect to the 
 $\vk_x$--valued form $R_x$. By choosing a base vector in $R_x$ one may identify  it with a
 \emph{symplectic} (=skew-symmetric and nondegenerate) bilinear form on the vector space 
 $V=\cD_x$. Recall that a \emph{symplectic vector space} (over $\R$) is an $\R$--vector  
 space supplied with a non-degenerate skew-symmetric form. A unital JS-subalgebra of 
 dimension $n$ with respect to the structure symplectic form on a $2n$--dimensional symplectic 
 vector space will be called \emph{basic}. 
  
 Importance of basic algebras is that they classify geometric solution 
 singularities of (nonlinear) PDEs (see \cite{Vsing}). So, the problem of describing basic
 algebras is fundamental in the solution singularities theory. This problem is not trivial just
 because it includes the problem of describing finite-dimensional commutative algebras over
 $\R$.
 
 By sightly abusing the language we shall call \emph{basic} also a JP-subalgebra $\cA$ of
 $\mathrm{Sad}(\vk)$ such that $\cA_x$ is basic in $\End_{\R}(\cD_x)$ for almost all $x\in M$, i.e.,
 for all $x\in N$ where $N$ is an everywhere dense open in $M$. Almost all fibers $\cA_x$ of 
 $\cA$ are of dimension $n$. By this reason we say that the dimension of $\cA$ is $n$.


\subsection{2-dimensional unital JS-algebras}\label{JS}
Here we shall illustrate the above-said  in the simplest nontrivial case $n=2$. But before we
shall list some elementary properties of symplectic self-adjoint operators for arbitrary $n$.
\begin{lem}\label{elementary}
Let $A$ be an symplectic self-adjont operator on a $2n$--dimensional symplectic vector space
$V$. Then
\begin{enumerate}
\item The symplectic form $<\cdot,\cdot>$ vanishes on any cyclic subspace 
$$
C_v\colon=\mathrm{Span}\{A^k(v)\}_{k\geq 0}, \,v\in V.
$$.
\item Root subspaces of $A$ corresponding to different eigenvalues of $A$ are symplectic
orthogonal.
\item $<\Ker A,\im A>=0$.
\end{enumerate}
\end{lem}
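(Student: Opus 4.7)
The plan is to derive all three assertions from just two structural identities on $V$: self-adjointness $\langle AX,Y\rangle=\langle X,AY\rangle$ and skew-symmetry $\langle X,Y\rangle=-\langle Y,X\rangle$. For part~(1), I would first check by a trivial induction that self-adjointness yields $\langle A^i v,A^j v\rangle=\langle v,A^{i+j}v\rangle$. Then skew-symmetry gives $\langle v,A^{i+j}v\rangle=-\langle A^{i+j}v,v\rangle$, and applying the same self-adjointness trick to the right-hand side produces $\langle A^{i+j}v,v\rangle=\langle v,A^{i+j}v\rangle$; combining these forces $\langle v,A^{i+j}v\rangle=0$, so $C_v$ is totally isotropic.

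For part~(2), the key remark is that scalar multiplications are self-adjoint, hence so is $A-\lambda\,\mathrm{id}$ for every $\lambda$. Iterating self-adjointness therefore gives $\langle (A-\lambda\,\mathrm{id})^k v,w\rangle=\langle v,(A-\lambda\,\mathrm{id})^k w\rangle$ for every $k\geq 0$. Choosing $k$ large enough that $(A-\lambda\,\mathrm{id})^k$ annihilates the whole root subspace $V_\lambda$ kills the left-hand side for every $v\in V_\lambda$ and every $w\in V_\mu$. The finishing move is the observation that on $V_\mu$ one has $A-\lambda\,\mathrm{id}=(\mu-\lambda)\,\mathrm{id}+N$ with $N:=A-\mu\,\mathrm{id}$ nilpotent; a nonzero scalar plus a nilpotent is invertible, so $(A-\lambda\,\mathrm{id})^k$ maps $V_\mu$ onto itself, and $\langle V_\lambda,V_\mu\rangle=0$ follows. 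Working over $\R$ rather than $\C$ introduces no new difficulty: one complexifies $V$, extends the form $\C$-bilinearly, runs the same argument on $V\otimes\C$, and restricts back.

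Part~(3) is essentially tautological: for $v\in\Ker A$ and any $w\in V$, self-adjointness gives $\langle v,Aw\rangle=\langle Av,w\rangle=0$, so $\langle\Ker A,\im A\rangle=0$. The only real obstacle I anticipate is the small bookkeeping in part~(2), namely making the root-subspace language precise over $\R$ and verifying the invertibility of $A-\lambda\,\mathrm{id}$ on $V_\mu$; the rest of the lemma is a two-line manipulation of the defining identities.
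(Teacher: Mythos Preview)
Your proof is correct, and for parts (1) and (3) it matches the paper's argument essentially verbatim. For part (2) the underlying idea is also the same --- show that a suitable polynomial in $A$ is self-adjoint, kills one root subspace, and is invertible on the other --- but the packaging differs. The paper stays over $\R$ throughout: it writes the root subspace attached to a complex eigenvalue $\lambda$ as $\Ker\,f(A)$ with $f(t)=(t^2-(\lambda+\bar\lambda)t+\lambda\bar\lambda)^m$, observes that the polynomials $f_1,f_2$ attached to distinct eigenvalues are coprime, and uses a B\'ezout identity $f_1g_1+f_2g_2=1$ to conclude that $f_1(A)$ is invertible on $\Ker f_2(A)$. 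Your route --- complexify, then use that a nonzero scalar plus a nilpotent is invertible --- is a bit more hands-on and requires the extra step of passing to $V\otimes\C$ and restricting back; the paper's B\'ezout argument trades that bookkeeping for the coprime-polynomials observation and handles real and complex eigenvalues in one stroke without leaving $\R$.
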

\begin{proof}
(1) This is an obvious consequence of $<Aw,w>=0$ for any $w\in V$. But
$<Aw,w>=<w,Aw>=-<Aw,w>$.

(2) The root subspace of $A$ corresponding to a real (resp., complex) eigenvalue $\lambda$
of it is of the form $\Ker f(A)$ where $f(t)=(t-\lambda)^k$ 
(resp., $f(t)=(t^2-(\lambda+\bar{\lambda})t+\lambda\bar{\lambda})^m)$. If eigenvalues $\lambda_1$ 
and $\lambda_2$ are different, then the corresponding to them polynomials $f_(t)$ and $f_2(t)$
are relatively prime and hence there are polynomials $g_1(t)$ and $g_2(t)$ such that
$$
f_1(t)g_1(t)+f_2(t)g_2(t)=1 \quad \Longrightarrow\quad f_1(A)g_1(A)+f_2(A)g_2(A)=\id_V.
$$
I follows from the last relation that $f_1(A)$ is invertible on $\Ker f_2(A)$ and vice versa. So,
\begin{eqnarray}
0=<f_1(A)(\Ker f_1(A),\Ker f_2(A))>=<\Ker f_1(A),f_1(A)(\Ker f_2(A)>=\nonumber\\
<\Ker f_1(A),\Ker f_2(A)>. \nonumber
\end{eqnarray}

(3) If $v\in\Ker A$, then $<v,Aw>=<Av,w>=0.$ 
\end{proof}
\begin{proc}\label{4-s-a}
Let $V$ be a symplectic vector space of dimension $4$ and $A$ be a non scalar, symplectic 
self-adjoint operator on $V$. Then the minimal polynomial $f(t)$ of $A$ is of second order
and
\begin{enumerate}
\item  if $f(t)$ has complex roots, then $A$ supplies $V$ with a complex structure, all its proper
subspaces are complex lines and these lines understood as 2-dimensional real planes are 
Lagrangian \emph{(elliptic case);}
\item if $f(t)$ has different real roots $\lambda_1, \lambda_2$, then eigenspaces 
$\Ker(A-\lambda_i\id_V)$ are symplectic orthogonal and non-Lagrangian planes 
\emph{(hyperbolic case);}
\item if roots of $f(t)$ coincide, i.e., $f(t)=(t-\lambda)^2$, then the unique eigenspace 
$W=\Ker (A-\lambda\id_V)$ of $A$ is a Lagrangian plane, which coincides with 
$\im(A-\lambda\id_V)$. Lagrangian planes intersecting $W$ by a line are cyclic
subspaces of $A$ of dimension 2 and vice versa \emph{(parabolic case).}
\end{enumerate}
\end{proc}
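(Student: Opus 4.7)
The plan is to first pin down that the minimal polynomial $f(t)$ of $A$ has degree exactly two, and then handle each of the three sub-cases using parts (1)--(3) of Lemma~\ref{elementary}. For the degree, I would use that each cyclic subspace $C_v$ is isotropic by part (1), so $\dim C_v\leq 2$ (the Lagrangian dimension in a $4$-dimensional symplectic space). Since the degree of the minimal polynomial coincides with the maximal $\dim C_v$, this forces $\deg f\leq 2$, and non-scalarity of $A$ gives $\deg f=2$.

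For the elliptic case $f(t)=(t-\alpha)^2+\beta^2$ with $\beta\neq 0$, I would introduce $J=(A-\alpha\id_V)/\beta$, which squares to $-\id_V$ and puts a complex structure on $V$. Invariant real subspaces of $A$ are $J$-invariant, hence of real dimension $0$, $2$ or $4$; a proper invariant $U$ has real dimension $2$. For any $0\neq v\in U$ the vectors $v,Av$ are independent (no real eigenvalue), so $C_v=U$, and part (1) gives that $U$ is isotropic, i.e.\ Lagrangian. For the hyperbolic case $f(t)=(t-\lm_1)(t-\lm_2)$ with distinct roots, set $V_i=\Ker(A-\lm_i\id_V)$. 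Part (2) gives $V_1\perp V_2$; since $V=V_1\oplus V_2$ and the symplectic form is non-degenerate on $V$, this orthogonality forces it to be non-degenerate on each $V_i$. Hence each $V_i$ is symplectic (so non-Lagrangian), of even positive dimension, summing to $4$, so each has dimension $2$.

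The serious work lies in the parabolic case. Write $N=A-\lm\id_V$, which is self-adjoint and nilpotent of index $2$, and set $W=\Ker N\supseteq \im N$. Rank-nullity gives $\dim W\geq 2$; the crucial step is to exclude $\dim W=3$. I would argue: if $\dim W=3$, then $\dim\im N=1$, with generator $u=Nv$ for some $v\notin W$, and $V=W+\R v$. Part (3) of the lemma gives $\sigma(u,W)=0$, while skew-symmetry combined with self-adjointness immediately yields $\sigma(w,Aw)=0$ for every $w$, and in particular $\sigma(v,u)=\sigma(v,Nv)=0$. Combining these two conclusions puts $u$ in the radical of $\sigma$ on $V$, contradicting $u\neq 0$. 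Hence $\dim W=2$, so $\im N=W$, and part (3) now shows $W$ is isotropic, therefore Lagrangian.

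The concluding correspondence is bookkeeping. Given a Lagrangian plane $L$ meeting $W$ in a line $\R w_0$, pick $v\in L\setminus W$; then $0\neq Nv\in W$. Using $Nw_0=0$ and self-adjointness of $N$, $\sigma(w_0,Nv)=\sigma(Nw_0,v)=0$, while $\sigma(v,Nv)=0$ as above, so $Nv\in L^\perp=L$, forcing $Nv\in L\cap W=\R w_0$ and $C_v=\mathrm{Span}(v,Nv)=L$. Conversely, a $2$-dimensional cyclic $C_v=\mathrm{Span}(v,Av)$ is isotropic by part (1), hence Lagrangian, and $\dim C_v=2$ forces $v\notin W$, so $0\neq Nv\in C_v\cap W$, making $C_v\cap W$ a line. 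The main obstacle I expect is ruling out $\dim W=3$ in the parabolic case, which requires simultaneously exploiting self-adjointness, skew-symmetry, and symplectic non-degeneracy.
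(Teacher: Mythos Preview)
Your proof is correct and follows essentially the same route as the paper's: both bound $\deg f\le 2$ via the isotropy of cyclic subspaces (Lemma~\ref{elementary}(1)), build $J=(A-\alpha\id_V)/\beta$ in the elliptic case, use Lemma~\ref{elementary}(2) for the hyperbolic orthogonality, and rule out $\dim W=3$ in the parabolic case by the identical contradiction ($\sigma(u,W)=0$ from Lemma~\ref{elementary}(3) together with $\sigma(v,Nv)=0$ forces $u=Nv$ into the radical). The only cosmetic differences are that in the hyperbolic step you infer non-degeneracy of $\sigma|_{V_i}$ from $V_1\perp V_2$ and $V=V_1\oplus V_2$ (the paper instead argues a $1$-dimensional eigenspace is impossible), and in the Lagrangian/cyclic correspondence you show $Nv\in L^\perp=L$ directly, whereas the paper passes through the auxiliary plane $\mathrm{Span}(u,Bu)$.
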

\begin{proof}
First, note that a linear operator possesses a cyclic subspace whose dimension equals to the
degree of its minimal polynomial. By lemma \ref{elementary}, (1), the symplectic form vanishes 
on a cyclic subspace and hence its dimension cannot be greater than 2. It cannot be 1-dimensional, since $A$ is not scalar. Hence the minimal polynomial of $A$ is of second order.

(1) If $\lambda_1=a+b\mathbf{i}$ and $B=b^{-1}(A-a\id_V)$, then $B^2=-\id_V$. So, $B$
supplies $V$ with a structure of $\C$--vector space and $A$ is $\C$-linear. In view of lemma
\ref{elementary}, (1), other assertions directly follows from this fact.

(2)In this case $V$ is the direct sum of eigenspaces.  By lemma \ref{elementary}, (2), they are
symplectic orthogonal, and, so, none of them could be of dimension 1. Indeed, the symplectic
orthogonal to a line is a 3-dimensional subspace containing this line. So, the eigenspaces are 
2-dimensional, and none of them can be Lagrangian, since they are symplectic orthogonal. 

(3) If $B=A-\lambda\id_V$, then $B^2=0$, i.e., $\im B\subset \Ker B, W=\Ker B$ and $B$ is 
symplectic self-adjoint. If
$\im B$ is 1-dimensional, then $\dim(\Ker B)=\dim W=3$. On the other hand, by  lemma \ref{elementary}, (3), $W$ is symplectic orthogonal to $\im B$ and, therefore, coincides with
the symplectic orthogonal complement of $\im B$. If $0\neq v\in \im B$ and $v=Bu$, then
$u\notin W$ and hence is not symplectic orthogonal to $v$. But this contradicts to
the fact that $<u,Bu>=0$ (lemma \ref{elementary}, (1)). Hence $\dim W=2, W=\im B$.
Moreover, $W$ is Lagrangian ss a symplectic orthogonal to $\im B$ subspace. 

Finally, let $L$ be a Lagrangan plane that intersects $W$ by a line $\ell$ and $u\in L\setminus \ell$.
Then $0\neq Bu\in W$. But the span $L'$ of $u$ and $Bu$ is a Lagrangian plane. It is symplectic orthogonal to $\ell$, since such are $u$ and $Bu$. But any Lagrangian plane, which is symplectic orthogonal to a line, contains this line. In particular, $L'$ contains $\ell$ and, therefore,   
$L'\cap W=\ell$. So, $Bu\in\ell$ and hence $L=L'$ and is cyclic. The converse is obvious.
\end{proof}
\begin{rmk}
It is curious to observe that a Lagrangian plane $W$ in a symplectic $V^4$ defines a symplectic 
self-adjoint operator $B$ such that $B^2=0$ and $\Ker B=W$. Such an operator is unique up to 
a scalar factor. Indeed, if $u\in V\setminus W$, then $Bu$ should belong to $W$ and be 
symplectic orthogonal to $u$. Since the symplectic orthogonal complement of $u$ intersects
$W$ by a line, say, $\ell_u$, $Bu$ should belong to $\ell_u$ and hence is unique up to a factor.
In order to construct one such operator choose a non-Lagrangian and complementary to $W$ 
plane $U$ and an isomorphism $h:U\rw W$ such that $h(u)\in \ell_u$. Then any $v\in V$ is
uniquely presented in the form $v=u+w, \,u\in U, \,w\in W,$ and we put $Bv:=h(u)$. 
\end{rmk}
Recall that a 2-dimensional unital associative algebra is isomorphic to the algebra of 
\emph{$\zeta$-complex} numbers whose elements are of the form
$x+y\zeta$ with $\zeta^2=-1, 0$ or $1$ and called \emph{double, dual} or \emph{complex},
numbers, respectively. Accordingly, denote these algebras by $\C_-(=\C), \C_0$ and $\C_+$.
\begin{cor}
There are 3 isomorphism classes of basic algebras for $n=2$ represented by algebras 
$\C_{\pm}$ and $\C_0$.
\end{cor}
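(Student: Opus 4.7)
The plan is to reduce the classification to Proposition \ref{4-s-a} via the observation that a 2-dimensional unital JS-algebra in $\mathrm{Sad}(V)$ is generated, as a unital Jordan algebra, by a single non-scalar symplectic self-adjoint operator.

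Concretely, let $\cA \subset \mathrm{Sad}(V)$ be a basic algebra with $\dim V = 4$ and $\dim \cA = 2$. Since $\cA$ is unital, it contains $\id_V$, so I can write $\cA = \mathrm{Span}_{\R}\{\id_V, A\}$ for some non-scalar $A \in \mathrm{Sad}(V)$. Closure under the Jordan product forces $A*A = A^2 \in \cA$, hence there exist $\alpha, \beta \in \R$ with $A^2 = \alpha \id_V + \beta A$; equivalently, the minimal polynomial of $A$ has degree at most two, and Proposition \ref{4-s-a} shows it has degree exactly two and splits into the three cases (elliptic, hyperbolic, parabolic).

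In each case I normalize the generator by an affine change $B = c(A - d\,\id_V)$ with $c \in \R^{\times}$, $d \in \R$, which preserves the algebra $\cA = \mathrm{Span}\{\id_V, B\}$ and is still symplectic self-adjoint. Completing the square in the minimal polynomial, I pick $d$ so that $B$ has no linear term and $c$ so that $B^2 \in \{-\id_V, 0, \id_V\}$: complex roots give $B^2 = -\id_V$ (so $\cA \cong \C_- = \C$); coincident real roots give $B^2 = 0$ (so $\cA \cong \C_0$); distinct real roots give $B^2 = \id_V$ (so $\cA \cong \C_+$). The isomorphism with the $\zeta$-complex numbers $x + y\zeta$ is then read off by sending $\id_V \mapsto 1$, $B \mapsto \zeta$, and it is a Jordan algebra isomorphism because $\cA$ is commutative (being 2-dimensional and unital) so that $A*B = AB$ there.

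For realizability, each case occurs: Proposition \ref{4-s-a} itself exhibits symplectic self-adjoint operators of all three types on a 4-dimensional symplectic $V$, and the unital span $\mathrm{Span}\{\id_V, A\}$ is then a 2-dimensional unital JS-subalgebra. Finally, the three algebras $\C_-, \C_0, \C_+$ are pairwise non-isomorphic as unital $\R$-algebras (for instance, $\C_0$ has a nonzero nilpotent, $\C_-$ is a field, and $\C_+$ has a nontrivial idempotent), so they represent distinct isomorphism classes. The only step requiring a little care is the affine normalization: I must verify that the shift $A \mapsto A - d\,\id_V$ and the rescaling are admissible within $\cA$ and preserve the Jordan structure, which is immediate since $\id_V \in \cA$ and $\mathrm{Sad}(V)$ is a $C^{\infty}(M)$-module (here a real vector space). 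No genuine obstacle is expected beyond unwinding Proposition \ref{4-s-a}.
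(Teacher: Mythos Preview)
Your proof is correct and follows essentially the same line as the paper: both start from $\cA=\mathrm{Span}\{\id_V,A\}$ with $A$ non-scalar symplectic self-adjoint, pass to a normalized generator $B$ with $B^2\in\{-\id_V,0,\id_V\}$, and invoke Proposition~\ref{4-s-a}. The paper's proof is terser and, in its final sentence, appeals to the structural content of Proposition~\ref{4-s-a} to conclude that two such $B$'s of the same type are symplectically conjugate (hence the algebras are isomorphic), whereas you argue the isomorphism with $\C_\zeta$ directly at the level of abstract unital $\R$-algebras and add the explicit checks of realizability and pairwise non-isomorphism; this is a matter of emphasis rather than a different route.
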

\begin{proof}
Any basic algebra for $n=2$ is generated by $\id_V$ and a non-scalar symplectic self-adjoint 
operator $A$. A linear combination of these operators is an operator $B$ such that 
$B^2=\pm\id_V \mbox{or} \;0$. It follows from proposition \ref{4-s-a} that two such operators of 
the same type are symplectically isomorphic.
\end{proof}
\begin{rmk}
There is a simple approach to description of symplectic self-adjoint operators based on following
observations.
\begin{enumerate} 
\item If $W$ is an $n$--dimensional $\R$--vector space, then $V=W\oplus W^*$ is symplectic
with respect to the form $\lfloor(w_1,\varphi_1),(w_2,\varphi_2)\rceil=\varphi_1(w_2)-\varphi_2(w_1)$.
\item Let $W$ and $W'$ be Lagrangian subspaces of a symplectic vector space $V$, which are 
complementary to each other. Then $\imath: w'\mapsto <w',\cdot>, \,w'\in W'$, is an isomorphism between $W'$ and $W^*$, which extends to the symplectic isomorphism $v=w+w'\mapsto w\oplus\imath(w')$ between $V$ and $W\oplus W^*$.
\item If $F\in \End W$, then $F\oplus F^*\in \End W\oplus W^*$ is symplectic self-adjoint.
\end{enumerate}
Now it is easily follows from (1)-(3) and \emph{lemma \ref{elementary}, (1)}, that any symplectic 
self-adjoint operator is of the form $F\oplus F^*$. 

The approach we have chosen above is motivated by the fact that it is better adapted to the
solution singularities theory.
\end{rmk}


\subsection{Geometric singularities of solutions of PDEs}\label{sng}
Here we shall assemble some facts concerning classification of singularities of R-manifolds that
are necessary to encode the meaning of classical MA-equations (see \cite{Vsing}, \cite{Vin}). The classification 
group is that of symmetries of the Cartan distribution $\cC_k(E,n)$. According to the 
Lie-B\"acklund theorem this group consists of diffeomorphisms of $E$ lifted to $J^k(E,n)$ if
$m>1$ and of lifted contact transformations of $J^1(E,n)$ if $m=1$.
 
The simplest question here is the first order classification, i.e., classification of tangent spaces
to R-manifolds at singular points. The first result in this direction is that this classifications is 
equivalent to classification of bends (see subsection \ref{jets}). The second important fact is 
that the classification of bends depends only on the dimension, say, $s$, of bends, i.e., does 
not depends on $m\geq1, k>1$ and $n\geq s$. This reduces the problem to the case 
$m=1, k=2, n=s$.

As it follows from the definition, the bend at $\theta$ of an R--manifold $U$ is a subspace of
 $T_{\theta}\Phi$ where $\Phi$ is the fiber of $\pi_{k,k-1}$ passing through $\theta$. If $m=1$,
 then  $T_{\theta}\Phi$ is identified with the space $P_{k,n}$ of homogeneous polynomials of 
 degree $k$ in $n$ variables over $\R$ (see \cite{KLV}). In these terms, the $s$--bend of an 
 R--manifold passing through $\theta$ can be characterized as a special subspace $\gimel$ 
 in  $P_{k,s}$, which, by abusing the language, we shall also call a bend. 
 
 It holds the following key assertion (see \cite{Vin})
 \begin{proc}\label{bend}
 If $\gimel\subset P_{k,s}$ is an $s$--bend, then the span of polynomials in $P_{k+1,s}$ whose
 derivatives belong to $\gimel$ is also an $s$--bend in $P_{k+1,s}$.
 \end{proc}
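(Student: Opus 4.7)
The plan is to realize both $\gimel$ and its prolongation as bends of R-manifolds in $J^k$ and $J^{k+1}$ respectively, connected by the natural prolongation operation on integral submanifolds of the Cartan distribution.

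By the reduction described just before the proposition, it suffices to work in the case $m=1$, $n=s$. Then $\gimel\subset P_{k,s}$ arises as $T_\theta U$ for some R-manifold $U\subset J^k(E^{s+1},s)$ at a point $\theta\in U_{\mathrm{sing}}$ where $d_\theta(\pi_{k,k-1}|_U)$ vanishes entirely; under the canonical identification $T_\theta\Phi\cong P_{k,s}$, with $\Phi$ the fiber of $\pi_{k,k-1}$ through $\theta$, the subspace $T_\theta U$ coincides with $\gimel$.

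Next I would construct the prolongation $U^{(1)}\subset J^{k+1}(E^{s+1},s)$. Points of $J^{k+1}$ lying over $\theta'\in J^k$ are in natural bijection with the $s$-dimensional horizontal integral elements of $\cC_k$ at $\theta'$: a $(k{+}1)$-jet encodes precisely the ``$(k{+}1)$-st order symbol'' determining such an integral element. At every regular point $\theta'\in U$, the tangent space $T_{\theta'}U$ is such a horizontal integral element, and hence admits a canonical lift $\tilde\theta'\in J^{k+1}$. I would define $U^{(1)}$ as the closure of the image of the lift map $U_{\mathrm{reg}}\to J^{k+1}$. This yields an $s$-dimensional integral submanifold of $\cC_{k+1}$ projecting onto $U$, i.e., an R-manifold.

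The heart of the argument is the computation of the bend of $U^{(1)}$ at a point $\tilde\theta$ lying above $\theta$. Parametrize $U$ near $\theta$ by $s$ parameters $t_1,\ldots,t_s$; the vanishing of $d_\theta(\pi_{k,k-1}|_U)$ forces $\partial x_i/\partial t_j$, $\partial u/\partial t_j$ and $\partial u_I/\partial t_j$ (for $|I|<k$) all to vanish at $t=0$, whereas $\partial u_I/\partial t_j|_{t=0}$ for $|I|=k$ records $\gimel$. The second-order Taylor coefficients of $\theta(t)$ then furnish a polynomial $q\in P_{k+1,s}$ which parametrizes the admissible lifts of $\theta$ to $U^{(1)}$; a direct computation of the differential of the lift map identifies $T_{\tilde\theta}U^{(1)}\cap T_{\tilde\theta}\tilde\Phi$ (with $\tilde\Phi$ the fiber of $\pi_{k+1,k}$ through $\tilde\theta$) with the set of polynomials $p\in P_{k+1,s}$ whose every partial derivative $\partial p/\partial x_i$ lies in $\gimel$, which is exactly $\gimel^{(1)}$.

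The main obstacle lies in this last step: at $\theta$ the tautological lift is undefined and must be obtained by a limiting procedure whose outcome a priori depends on the direction of approach. One must verify that the closure $U^{(1)}$ is smooth above $\theta$, that each admissible lift $\tilde\theta$ is a point where $\pi_{k+1,k}|_{U^{(1)}}$ has $s$-dimensional kernel, and that this kernel admits exactly the algebraic description $\gimel^{(1)}$. The geometric reason this works is that the consistency (equality of mixed partials) built into $(k{+}1)$-st order polynomials reduces precisely to the prolongation condition $\partial p/\partial x_i\in\gimel$ defining $\gimel^{(1)}$.
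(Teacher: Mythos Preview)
The paper does not actually prove this proposition. Immediately before the statement it says ``It holds the following key assertion (see \cite{Vin})'' and the reference \cite{Vin} is listed as ``in preparation.'' There is therefore no proof in the paper to compare your proposal against.

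As for your proposal on its own merits: the strategy---realize $\gimel$ as the bend of an R-manifold $U$, prolong $U$ to $U^{(1)}\subset J^{k+1}$, and identify the bend of $U^{(1)}$ above $\theta$ with $\gimel^{(1)}$---is the natural geometric approach and is almost certainly the intended one. However, you yourself flag the essential difficulty and do not resolve it: the tautological lift is undefined at the singular point $\theta$, so $U^{(1)}$ is obtained as a closure, and you still \emph{must} show that this closure is a smooth $s$-dimensional integral submanifold near the fiber over $\theta$, that $\pi_{k+1,k}|_{U^{(1)}}$ has $s$-dimensional kernel there, and that this kernel is exactly $\gimel^{(1)}$. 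Your final paragraph states these as things ``one must verify'' rather than verifying them. In particular, whether the closure of the lift of $U_{\mathrm{reg}}$ is smooth and of the right dimension depends on the choice of $U$ realizing $\gimel$; for a generic $U$ this may fail, so you would in fact need to construct a specific $U$ (e.g., one given by explicit formulas in a jet chart) for which the limiting behaviour can be computed directly. As written, then, this is a plausible outline rather than a proof.
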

 
In the case of MA--equations we have $m=1, k=n=2$. Peculiarity of this case is twofold. First, 
it is easy to see that any 2-dimensional subspace of $P_{2,2}$ in the condition of proposition \ref{bend}, and hence any 2-dimensional subspace tangent to a fiber of $\pi_{2,1}$ is a 2-bend. 
Then, a second order PDE $\cE\subset J^2(E^3,2)$, generally, intersects fibers of $\pi_{2,1}$ 
by 2-dimensional submanifolds. So, tangent spaces to these submanifolds are 2-bends.


\subsection{2-bends, basic algebras and $\zeta$--holomorphic functions}
Proposition \ref{bend} establishes a natural relation between bends and basic algebras, which 
we shall make explicit in the case $n=2, m=1$ (see \cite{Vin} for the general case). 

Let $\gimel\subset P_{k,2}, \,k\geq 2,$ be a bend. 
Then, according to proposition \ref{bend}, there is a polynomial $f(x,y)\in P_{k+1,2}$ whose
derivatives $f_x$ and $f_y$ form a basis of $\gimel$. By the same proposition there should 
be a non proportional to $f$ polynomial $g$ whose derivatives belong to $\gimel$. Hence
$g_x=\alpha f_x+\beta f_y, \,g_y=\gamma f_x+\delta f_y, \,\alpha,\dots,\delta\in\R$, and
$(\alpha f_x+\beta f_y)_y=(\gamma f_x+\delta f_y)_x$, or, equivalently, 
\begin{equation}\label{lapl}
\gamma f_{xx}+(\delta-\alpha)f_{xy}+\beta f_{yy}=0.
\end{equation}
Since $g$ is not proportional to $f$, coefficients of equation \ref{lapl} do not vanish simultaneously,
and this equation can be brought to one of the forms $u_{xx}\pm u_{yy}=0$ or $u_{xx}=0$.
Accordingly, one of basic basic algebras $\R_{\pm}$ or $\R_0$ is associated with $\gimel$. 
More exactly, this basic algebra is generated by the operator whose matrix is
$$
\left(\begin{array}{cc}
\alpha & \beta \\ \gamma & \delta
\end{array}\right).
$$
Normal forms of 2-bends are easily deduced from this fact and they are 
$$
\mathrm{Span}(\mathrm{Re}\,z^k,\mathrm{Im}\,z^k), \quad z=x+\zeta y,
$$ 
where $\mathrm{Re}$ and $\mathrm{Im}$ refer to the ``real" and ``imaginary"
parts of a $\zeta$-complex number (see subsection \ref{JS}), respectively. 

Examples of R-manifolds with one singular point, in which these normal 2--bends are realized, 
are easily constructed in terms of $\zeta$-holomorphic functions. Namely, let $N$ be a 
2-dimensional manifold. An $C^{\infty}(N)$-homomorphism $A\colon D(N)\rw D(N)$  such 
that $A^2=\zeta^2\id_{D(N)}$ supplies $N$ with a structure of a \emph{$\zeta$-complex curve}. 
A function $u\in C^{\infty}(N)$ is \emph{$\zeta$-harmonic} if $A^*(du)=dv$ for a function 
$v\in C^{\infty}(N)$. Here $A^*\colon \Lambda^1(N)\rw \Lambda^1(N)$ is the dual to $A$ homomorphism. The function $v$ is unique up to a constant and is called ($\zeta$-)conjugate 
to $u$. It is also $\zeta$-harmonic and together with $u$ form a $\C_{\zeta}$-valued function 
$f=u+\zeta v$. Such a function is called \emph{$\zeta$-holomorphic}.

Generally, non-constant $\zeta$--holomorphic functions on $N$ exist only locally. A pair of 
$\zeta$-conjugate local functions $x$ and $y$ on $N$ forms a local $\zeta$-complex
chart on $N$, and, locally, $\zeta$-holomorphic functions may be viewed as functions of
$\zeta$-complex variable $z=x+\zeta y$. In particular, the transition function between two 
$\zeta$-complex charts is $\zeta$-holomorphic. Standard complex curves are, obviously, 
$\zeta$-complex ones for $\zeta^2=-1$. If $\zeta^2=1$ (resp., $\zeta^2=0$), then, as it is 
easy to see, a $\zeta$-complex curve is a 2-dimensional manifold supplied with two transversal
to each other 1-dimensional distributions (resp., one 1-dimensional distribution).

Locally, the condition $A^*(du)=dv$ is equivalent to $(d\circ A\circ d)(u)=0$. The operator 
$\Delta_{\zeta}=d\circ A\circ d$ will be called the \emph{$\zeta$-Laplace operator}, since 
in a $\zeta$-complex chart it reads $\Delta_{\zeta}=\p^2/\p x^2-\zeta^2\p^2/\p y^2$. Accordingly,
$u_{xx}-\zeta^2\ u_{yy}=0$ is the \emph{$\zeta$--Laplace equation}. So, in these terms, $\zeta$-harmonic functions are solutions of the $\zeta$-Laplace equation. This equation express the
compatibility condition for the \emph{$\zeta$-Cauchy-Riemann equations} $A^*(du)=dv$, 
which in a $\zeta$-complex chart read  $u_x=-\zeta^2 v_y, \;u_y=\zeta^2 v_x$. In other 
words, a function $f=u+\zeta v$ is  $\zeta$-holomorphic iff its \emph{real} and \emph{imaginary} 
parts $u=\mathrm{Re}\,f$ and $v=\mathrm{Im}\,f$, respectively, satisfy the $\zeta$-Cauchy-Riemann equations.

Now consider the $(k-2)$-th prolongation $\cE_{(k-2)}\subset J^k(E,n)$ of the $\zeta$-Laplace equation $\cE\subset J^{2}(E,n)$. It is given by equations
\begin{equation}\label{norm}
u_{2+r,s}-\zeta^2u_{r,s+2}=0, \quad r+s\leq k-2 
\end{equation}
with $u_{p,q}$ being the function on $J^{p+q}(E,n)$ representing the operator 
$\p^{p+q}/\p x^p\p y^q$ in a standard local jet-chart. Intersections of $\cE_{(k-2)}$ with fibers 
of $\pi_{k,k-1}$ are given by equations 
$$
u_{p,q}=\mathrm{const}, \;p+q\leq k-1, \quad u_{2+r,s}-\zeta^2u_{r,s+2}=0, \quad r+s=k-2 
$$
and hence  are 2-dimensional. Vectors
$$
\nu_1=\sum_{r=0}^{[k/2]}\zeta^{2r}\p/\p u_{2r,k-2r}, \quad
\nu_2=\sum_{r=0}^{[k-1/2]}\zeta^{2r}\p/\p u_{1+2r,k-1-2r} 
$$
form a basis of the tangent space to such an intersection. In coordinates the identification of 
tangent to fibers of $\pi_{k,k-1}$ vectors with homogeneous polynomials of degree $k$ (see 
subsecton \ref{sng}) looks as 
$$
\frac{\p}{\p u_{r,k-r}}\quad\longleftrightarrow \quad\frac{1}{r!(k-r!}x^ry^{k-r}
$$
So, the vectors $\nu_1$ and $\nu_2$ are identified with $\frac{1}{k!}\mathrm{Re}(x+\zeta y)^k$ 
and $\frac{1}{k!}\mathrm{Im}(x+\zeta y)^k$, respectively.
 
 Let now $1<l\in \Z$ and consider the R-manifold $L_{k,l}^{\zeta}$ in $J^k(E,n)$
 given by equations (\ref{norm}) and equations:

 \begin{eqnarray}\label{Rl}
x=\frac{1}{(k+\frac{1}{l})!}\mathrm{Re}(u_{(k,0)}+\zeta u_{(k-1,1)})^k, \quad
y=\frac{1}{(k+\frac{1}{l})!}\mathrm{Im}(u_{(k,0)}+\zeta u_{(k-1,1)})^k, \nonumber \\
u_{(k-r,0)}=\frac{1}{(r+\frac{1}{l})![(k+\frac{1}{l})!]^{{lr}}}\mathrm{Re}(u_{k,0}+\zeta u_{k-1,1})^{lr+1},
\quad  1\leq r\leq k, \\
u_{(k-r-1,1)}=\frac{1}{(r+\frac{1}{l})![(k+\frac{1}{l})!]^{{lr}}}\mathrm{Im}(u_{k,0}+\zeta u_{k-1,1})^{lr+1},
\quad  1\leq r\leq k. \nonumber
\end{eqnarray}
where $(s+\frac{1}{l})!:=(1+\frac{1}{l})(2+\frac{1}{l})\dots(s+\frac{1}{l}), \,s\in \Z_+$. This R-manifold
has the unique singular point $u_{(r,s)}=0,  \,r+s\leq k$, in which the bend is
$\mathrm{Span}(\mathrm{Re}\,z^k,\mathrm{Im}\,z^k)$. It may be viewed as the real part of the
$k$--th jet of the multivalued $\zeta$-holomorfic function $f(z)=z^{k+\frac{1}{l}}$. 

It is not difficult to show that R-manifolds $L_{k,l}^{\zeta}$ corresponding to different $l$'s are
not equivalent, while they have the common bend.
\begin{rmk}
The problem of finding $s$-bend solutions for relevant equations of mathematical physics and differential geometry was not yet systematically studied even for $s=2$. Some  2-bend solutions 
of the vacuum Einstein equations were constructed in \cite{SVV}. Among them are 
\emph{foam-like} solutions describing ``parallel universes" separated by singularities. The 
\emph{square root} of the Schwarzschild metric is the simplest of that kind.
\end{rmk}


\subsection{Intrinsic definition of classical Monge-Amp\`ere equations}
Now we are ready to formulate a \emph{conceptual definition} of classical MA-equations. We say 
that a Legendrian submanifold $L\subset M$ is invariant with respect to a JS-subalgebra $\cA$ in
$\mathrm{Sad}(\vk)$ if $T_xL$ is $\cA_x$-invariant for all $x\in M$.
\begin{defi}\label{concept}
A classical \emph{Monge-Amp\`ere problem} is that of finding Legendrian submanifolds 
of a 5-dimensional contact manifold $M$ that are invariant with respect to a given basic
algebra $\cA\subset\mathrm{Sad}(\vk)$.  $\cA$--invariant  Legendrian submanifolds 
are called \emph{solutions} of this problem.
\end{defi}

In order to connect definitions \ref{MA} and \ref{concept}, recall that according to the classical
Darboux lemma, a given contact form $\omega$ on a $2n+1$--dimensional manifold locally
admits a \emph{Darboux chart} $(x_i,p_i,u), \,i=1,\dots,n,$, in which it takes the \emph{canonical}
form $\omega=du-\sum_{i=1}^{n}p_idx_i$. A regular with respect to this chart Legendrian 
submanifold is given by equations
$$
u=f(x), \;p_i=\frac{\p f(x)}{\p x_i}, \,\;i=1,\dots,n, \;\mbox{with} \; x=(x_1,\dots,x_n).
$$
To underline that such a submanifold is described by a function $f(x)$ we denote it by $L_f$. 
\begin{proc}
Solutions of equation \emph{(\ref{MA})} are solutions of a classical Monge-Amp\`ere problem
for Legendrian submanifolds of the form $L_f$ and vice versa.
\end{proc}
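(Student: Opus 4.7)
The plan is to work locally in a Darboux chart $(x_1,x_2,p_1,p_2,u)$ in which $\omega = du - p_1\,dx_1 - p_2\,dx_2$, using the frame $X_i = \p/\p x_i + p_i\,\p/\p u$, $Y_i = \p/\p p_i$ for $\cD$. By the Lemma of Section~2.1, the curvature $R$ is then identified with the standard symplectic matrix $J = \left(\begin{smallmatrix}0 & I_2\\-I_2 & 0\end{smallmatrix}\right)$ in this frame. At any point $\theta \in L_f$ the tangent plane $T_\theta L_f$ is the graph of the Hessian $H = \mathrm{Hess}_\theta\, f$ in the splitting $\cD_\theta = \mathrm{span}(X_1,X_2)\oplus\mathrm{span}(Y_1,Y_2)$, spanned by the vectors $X_i + \sum_j H_{ij} Y_j$, $i=1,2$.

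Next I would exploit the self-adjointness relation $\Phi^{T}J = J\Phi$, which forces any $\Phi \in \mathrm{Sad}(\vk)_\theta$ into the block form $\Phi = \left(\begin{smallmatrix}P & Q\\ S & P^{T}\end{smallmatrix}\right)$ with $Q,S$ \emph{antisymmetric} $2\times 2$ matrices. A short calculation then rewrites $\Phi(T_\theta L_f)\subset T_\theta L_f$ as the single matrix equation
\[
HQH + HP - P^{T}H - S = 0.
\]
Transposing and using $Q^{T} = -Q$, $S^{T} = -S$ shows this matrix is antisymmetric, so in dimension 2 it collapses to a single scalar equation. Writing $Q = qJ_0$, $S = sJ_0$ with $J_0 = \left(\begin{smallmatrix}0 & 1\\-1 & 0\end{smallmatrix}\right)$ and computing $HQH = q\det(H)J_0$ and $HP - P^{T}H = \bigl(p_{12}H_{11} + (p_{22}-p_{11})H_{12} - p_{21}H_{22}\bigr)J_0$, the invariance becomes
\[
q(f_{x_1x_1}f_{x_2x_2} - f_{x_1x_2}^{2}) + p_{12}f_{x_1x_1} + (p_{22}-p_{11})f_{x_1x_2} - p_{21}f_{x_2x_2} - s = 0,
\]
which is precisely (\ref{MA}) under the correspondence $N = q$, $A = p_{12}$, $B = p_{22}-p_{11}$, $C = -p_{21}$, $D = -s$.

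For the converse I would start from the coefficient functions $N,A,B,C,D$ of (\ref{MA}) and define $\Phi$ by the same formulas (for instance $p_{11}=0$, $p_{22}=B$), then verify that $\langle \mathrm{id},\Phi\rangle$ is automatically closed under the Jordan product: using $QS = -qs\, I_2$ and the Cayley--Hamilton identity $P^{2} = \mathrm{tr}(P)P - \det(P)I_2$, one obtains $\Phi^{2} = -(\det P + qs)\,\mathrm{id} + \mathrm{tr}(P)\,\Phi$, so $\langle \mathrm{id},\Phi\rangle$ is a 2-dimensional unital JS-subalgebra of $\mathrm{Sad}(\vk)$, i.e.\ a basic algebra (non-scalar whenever (\ref{MA}) is nontrivial). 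The only conceptual subtlety---and the expected main obstacle---is pinning down the correct block decomposition of self-adjoint operators; the fact that $Q$ and $S$ turn out antisymmetric rather than symmetric is exactly what makes the apparently three-component matrix condition collapse to a single equation, matching (\ref{MA}) term by term. Once that is settled, both implications reduce to direct algebraic verifications.
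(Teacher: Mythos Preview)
Your argument is correct and complete, but it follows a genuinely different route from the paper's. The paper simply \emph{writes down} an explicit $4\times4$ matrix $\mathfrak{A}$ in the Darboux frame, checks that $\mathfrak{A}^2=\Delta\cdot I$ with $\Delta=B^2-4AC+4ND$, and then computes $\mathfrak{A}(Z_1)$ for the tangent field $Z_1=X_1+f_{x_1x_1}Y_1+f_{x_1x_2}Y_2$ directly, obtaining $\mathfrak{A}(Z_1)=(\ldots)Z_1+(\ldots)Z_2-2E\,\p_{p_2}$ with $E$ the Monge--Amp\`ere expression; invariance is then visibly equivalent to $E=0$. No general block decomposition of self-adjoint operators is invoked, and closure under the Jordan product is immediate from $\mathfrak{A}^2=\Delta\cdot I$.

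Your approach is more structural: you first characterise \emph{all} elements of $\mathrm{Sad}(\vk)$ in the frame via $\Phi^TJ=J\Phi$, derive the invariance condition as the matrix identity $HQH+HP-P^TH-S=0$, and then observe that antisymmetry forces this to be a single scalar equation in dimension~2. This buys a clear conceptual explanation of \emph{why} one obtains exactly one second-order PDE and why the Hessian determinant appears (via $HJ_0H=\det(H)J_0$), and your Cayley--Hamilton computation of $\Phi^2$ recovers the discriminant in the same stroke. The paper's approach, by contrast, is shorter but presents the matrix $\mathfrak{A}$ as a fait accompli. Your non-traceless $\Phi$ and the paper's traceless $\mathfrak{A}$ generate the same basic algebra, since $\mathfrak{A}$ is (up to a factor) $2\Phi-\mathrm{tr}(P)\,\mathrm{id}$.
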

\begin{proof}
A natural basis of $\cD$ in the Darboux  chart $(x_i,p_i,u)$ is 
$$
\p_{x_1}+p_1\p u, \quad\p_{x_2}+p_2\p u, \quad\p_{p_1}, \quad\p_{p_2},
$$
and the operator $\mathfrak{A}$ whose matrix in this basis is
$$
\left(\begin{array}{cccc}
B & -2A & 0 & -2N \\ 2C & -B & 2N & 0 \\ 0 & 2D & B & 2C \\ -2D & 0 & -2A & -B 
\end{array}\right)
$$
is such that $\mathfrak{A}^2= \Delta I$ where $I$ is the unit matrix and $\Delta=B^2-4AC+4ND$. Recall that equation 
(\ref{MA}) is elliptic (resp., parabolic, or hyperbolic) if $\Delta<0$ (resp., $=0$, or $>0$). 
$\mathfrak{A}$ and $\id_{\cD}$ span a basic algebra $\cA$. Obviously, solutions of the 
corresponding Monge-Amp\`ere problem are $\mathfrak{A}$--invariant Legendrian 
submanifolds.

Vector fields 
$$
Z_1=\p_{x_1}+p_1\p u+f_{x_1x_1}\p_{p_1}+f_{x_1x_2}\p_{p_2}, \quad 
Z_2=\p_{x_2}+p_2\p u+f_{x_1x_2}\p_{p_1}+f_{x_2x_2}\p_{p_2}
$$ 
are tangent to $L_f$, while
$$
\mathfrak{A}(Z_1)=(B-2f_{x_1x_2}N)Z_1+2(C+f_{x_1x_1}N)Z_2-2E\p p_2
$$
where
$E=N(f_{x_1x_1}f_{x_2x_2}-f_{x_1x_2}^2)+Af_{x_1x_1}+Bf_{x_1x_2}+Cf_{x_2x_2}+D$,
and similarly for $\mathfrak{A}(Z_2)$. This shows that $L_f$ is $\mathfrak{A}$--invariant
iff $E=0$.
\end{proof}

Thus this proposition establishes a one-to-one correspondence between MA-equations and
2-dimensional basic algebras on contact 5-folds. This correspondence put in evidence the
fact that any solution of an MA-equation $\cE$ is a bi-dimensional manifold $L$ 
supplied with an algebra of endomorphisms of   $D(L)$. Namely, this algebra, denoted by 
$\cA_L$, is composed from restricted to $L$ elements of the associated with $\cE$ basic 
algebra $\cA$.  If $\cE$ is elliptic (resp., parabolic or hyperbolic), then $\cA_L$ is of type 
$\C_-$ (resp., $\C_ 0$ or $\C_+$), i.e., $L$ is a $\zeta$-complex curve for the 
corresponding $\zeta$. In fact, solutions of arbitrary second order PDE in two 
independent variables are naturally supplied with a such an algebra of endomorphisms.
This is due to the fact that any 2-dimensional subspace tangent to a fiber of $\pi_{2,1}$
is a 2-bend.

\begin{rmk}
One of central questions in solution singularity theory is to what extent a given PDE $\cE$ 
is predetermined by behavior of singularities of its solutions. More exactly, a series of subsidiary
equations $\cE_{\Sigma}$ where $\Sigma$ is a singularity type admitted by solutions of $\cE$
is associated with $\cE$. See \cite{LMSV} for some examples. The \emph{reconstruction problem} 
is : whether $\cE$ can be reconstructed if all equations $\cE_{\Sigma}$ are known? A spectacular,
though implicit, historical example of solution of this problem is Maxwell's deduction of his
famous equations from previously found elementary (Coulomb, ... , Faraday) laws of electricity 
and magnetism, which, according to the modern point of view, describe behavior of singularities 
of electromagnetic fields. In this context classical MA-equations are distinguished by the fact 
that the reconstruction problem for them  is a tautology, namely, 
by definition \emph{\ref{concept}}.
\end{rmk}

\subsection{Concluding remarks}
The above interpretation of classical MA-equations was enlightening in the development 
fundamentals of solution singularities theory as one of key examples. Indeed, the discussed
in this section constructions and results can be generalized to dimensions $n>2$. For instance,
definition \ref{concept}  in an obvious manner generalizes to higher dimensions. It is not difficult 
to see that $n$--dimensional Monge-Amp\`ere problem is described by a system of 
$\frac{n(n-1)}{2}$ second order PDEs, which which may be viewed as another natural 
generalization of classical MA-equations.

This interpretation is useful in the study of classical MA-equations themselves. In particular, it
suggests more exact techniques for integrating concrete MA-equations, finding their classical symmetries, conservation laws, etc. For instance, classical infinitesimal symmetries of the 
MA-equation associated with a basic algebra $\cA$ may be defined as contact fields $Z$ such
that $[Z,\cA]\subset \cA$ and this interpretation directly leads to a simple computational procedure.

Another example we would like to
mention here is the classical problem of contact classification of MA-equations, which, 
essentially, is equivalent to the problem of finding a sufficient number of their scalar differential invariants. Most systematically this problem was studied   by V.~Lychagin and his collaborators (V.~Rubtsov, B.~Kruglikov,  A.~Kushner and others) in last three decades. These authors exploited Lychagin's idea to represent MA-equations in terms of \emph{effective} 2-forms. Most complete 
results in this approach were obtained by A.~Kushner (see short note \cite{K}). Full details of his
work are reproduced in monograph \cite{KLR} together with earlier results of these authors.
In these works scalar differential invariants of hyperbolic and elliptic MA-equations are constructed
indirectly as differential invariants of the associated $e$--structures. On the contrary, definition
\ref{concept} allows a direct construction of scalar differential invariants in terms of operators of 
the corresponding basic algebra, which leads to more complete and exact results (see \cite{MVY}, \cite{DV}, \cite{VD}, \cite{Diego}).

\end{document}